\tikzstyle{vertex}=[circle, draw, inner sep=2pt, minimum size=6pt]
\def\noi{\noindent}
\def\firstpage{37}
\begin{document}
{\fontfamily{cmss}\selectfont

\titlefigurecaption{\hspace{0.3cm}{\hspace{0.6cm}\LARGE \bf \sc \sffamily\color{white} Contemporary Studies in Discrete Mathematics}}

\title{\sc \sffamily On the Fading Number of a Graph}

\author{\sc\sffamily Johan kok$^{1,\ast}$, Sudev Naduvath$^2$ and Eunice Gogo Mphako-Banda$^3$}
\institute {$^{1,2}$ Centre for Studies in Discrete Mathematics, Vidya Academy of Science \& Technology, Thalakkottukara,Thrissur, India.\\
$^{3}$School of Mathematical Sciences, University of Witswatersrand, Johannesburg, South Africa.
}


\titlerunning{On the Fading Number of a Graph}
\authorrunning{J. Kok, S.Naduvath \& E.G. Mphako-Banda}

\mail{kokkiek2@tshwane.gov.za}

\received{26 August 2017}
\revised{28 September 2017}
\accepted{05 October 2017}
\published{20 October 2017.}

\abstracttext{The closed neighbourhood $N[v]$ of a vertex $v$ of a graph $G$, consisting of at least one vertex from all colour classes with respect to a proper colouring of $G$, is called a rainbow neighbourhood in $G$.  The minimum number of vertices and the maximum number of vertices which yield rainbow neighbourhoods with respect to a chromatic colouring of $G$ are called the minimum and maximum rainbow neighbourhood numbers, denoted by $r^-_\chi(G)$, $r^+_\chi(G)$ respectively. In this paper, by a colour, we mean a solid colour and by a transparent colour, we mean the fading of a solid colour. The fading numbers of a graph $G$, denoted by $f^-(G)$, $f^+(G)$ respectively, are the maximum number of vertices for which the colour may fade to transparent without a decrease in $r^-_\chi(G)$ and $r^+_\chi(G)$ respectively.}
\keywords{Chromatic colouring, rainbow neighbourhood, rainbow neighbourhood number, fading number. }

\msc{05C15, 05C38, 05C75.}
\maketitle

\section{Introduction}

For all terms and definitions, which are not defined specifically in this paper, we refer to \cite{BM1,FH,DBW}. Unless mentioned otherwise, all graphs considered here are undirected, simple, finite and connected.

The \textit{open neighbourhood} of a vertex $v \in V(G)$, denoted by $N(v)$, is the set of all vertices that are adjacent to $v$. The closed neighbourhood of $v$, denoted by $N[v]$, is defined as $N(v)\cup \{v\}$.

A \textit{vertex colouring} of a graph $G$ is an assignment $\varphi:V(G) \mapsto \mathcal{C}= \{c_1,c_2,\ldots,c_\ell\}$, of a set of colours to the vertices of $G$ and is denoted by, $\varphi(G)$. A vertex colouring $\varphi(G)$ is said to be a \textit{proper vertex colouring} of a graph $G$ if no two adjacent vertices have the same colour. The minimum number of colours in a proper colouring of $G$ is called the \textit{chromatic number} of $G$ and is denoted by $\chi(G)$. We call a colouring consisting of $\chi(G)$ colours a \textit{$\chi$-colouring} or a \textit{chromatic colouring} of $G$. When the context is clear the corresponding chromatic colouring is written as $c:V(G)\mapsto \mathcal{C}$ or as $c(V(G)) = \mathcal{C}$. Generally, when the cardinality of the set of colours $\mathcal{C}$ is bound by conditions such as minimum, maximum or others and since, $c(V(G)) = \mathcal{C}$, it can be agreed that $c(G)$ means $c(V(G))$ hence, $c(G) \Rightarrow \mathcal{C}$ and $|c(G)| = |\mathcal{C}|$. 

The set of all vertices of a graph $G$ having a particular colour $c_i$ with respect to a chromatic colouring $c(G)$ is called the colour class of $c_i$ and is denoted by, $\mathcal{C}_i$.

Unless mentioned otherwise, in the following discussion, we follow the following convention in colouring the vertices of a graph $G$. Let $I_1$ be the maximal independent in $G$. Assign colour $c_1$ to all vertices in $I_1$. Let $G_1=G-I_1$ and let $I_2$ be a maximal independent set in $G_1$.  Assign colour $c_2$ to all vertices in $I_2$. Proceed like this until all vertices in $G$ are coloured properly. This convention is called \textit{rainbow neighbourhood convention} (see \cite{KNJ}).

The closed neighbourhood $N[v]$ of a vertex $v \in V(G)$, consisting of at least one vertex from all colour classes, with respect to a chromatic colouring $c(G)$, is called a \textit{rainbow neighbourhood} in $G$. The number of vertices in $G$ which yield rainbow neighbourhoods in $G$ is called the \textit{rainbow neighbourhood number} of $G$ and is denoted by $r_\chi(G)$. For further studies on rainbow neighbourhood number of graphs, see \cite{KNJ,KNB,NSKK,SNKK}. Note that $r^-_\chi(G)$ and $r^+_\chi(G)$ respectively denote the minimum value and maximum value of $r_\chi(G)$ over all minimum proper colourings (see \cite{KNB}).

A \textit{minimum parameter colouring} of a graph $G$ is a proper colouring of $G$ which consists of the colours $c_i;\ 1\le i\le \chi(G)$ or put differently, $i = 1, 2, 3,\ldots, \chi(G)$. In other words, a minimum parameter colouring of $G$ is the colouring consisting of the colours having minimum subscripts. Unless stated otherwise, we consider minimum parameter colouring throughout this paper. 
Note that $r^-_\chi(G)$ necessarily corresponds to a chromatic colouring in accordance with the rainbow neighbourhood convention. 


\section{Fading Number of a Graph}

In this discussion, by a colour we mean a solid colour (or an opaque colour), while  \textit{transparent colour} is considered to be the fading of a solid colour. We shall denote a transparent colour by $c^\circ$ and note that $c^\circ \notin \mathcal{C}$. Therefore, for any vertex $v$ with $c(v)=c^\circ$ and adjacent to a vertex $u$, we have $c(v)\notin c(N[u])$. 

\noi The notion of the fading number of a graph $G$ is defined as:

\begin{definition}{\rm
The \textit{fading number} corresponding to a chromatic colouring of $G$ is the maximum number of vertices for which their solid colours may fade to transparent without any decrease in its rainbow neighbourhood number $r_\chi(G)$. 
}\end{definition}

A \textit{fade set} of graph $G$ corresponding to a chromatic colouring is defined as  $F^\circ(G) = \{v: c(v) = c^\circ\}$.

Let $F^\circ(G)$ be a fade set of graph $G$ corresponding to a chromatic colouring $c(G)$ defined by $F^\circ(G) = \{v: c(v) = c^\circ\}$. Also, let $f^-(G) =\max\{|F^\circ(G)|:r^-_\chi (G) = r^-_\chi(\langle V(G) - F^\circ(G)\rangle)\}$, where $r^-_\chi(G)$ is taken over all possible fade sets $F^\circ(G)$.  Similarly, let $f^+(G) =\max\{|F^\circ(G)|:r^+_\chi (G) = r^+_\chi(\langle V(G) - F^\circ(G)\rangle)\}$, where $r^+_\chi(G)$ is taken over all possible fade sets $F^\circ(G)\}$. 

Note that since both $f^-(G)$ and $f^+(G)$ are maximum values over similar sets which differ only because of the number of times colours in a chromatic colouring are allocated to vertices. Hence, we have $f^-(G) \geq f^+(G)$.

Consider the thorn cycle $C^\star_3$ which has cycle vertices $v_1,v_2,v_3$ with  $t_i\geq1$ pendant vertices at the vertex $v_i$, where $i=1,2,3$. Clearly, $\chi(C^\star_3)=3$ and $r^-_\chi(C^\star_3)=r^+_\chi(C^\star_3)=3$. The colours assigned to the $\sum\limits_{i=1}^{3}t_i$ pendant vertices may all fade to $c^\circ$ without a decrease in the respective rainbow neighbourhood numbers. This example establishes the existence of infinitely many graphs with arbitrary large fading numbers.

For $n\ge 1$, both the null graph $\mathfrak{N}_n$ and the complete graph $K_n$, are the extremal graphs with regards to the structor index $si(G)$ of a graph $G$ and hence we have $f^-(\mathfrak{N}_n)=f^+(\mathfrak{N}_n)=0$ and $f^-(K_n)=f^+(K_n)=0$.

Solid colours may represent a specific property abstraction, technology type or infrastructure type all of which are subject to deterioration. This deterioration is modelled to be fading of solid colours. The question to be answered is when critical deterioration has established in a graph or network such that fully functional subgraphs or subnetworks are retained. 

From the definition and concepts mentioned above, we have the following theorem, which sets a lower bound to the fading number of a general chromatic colouring of a graph $G$.

\begin{theorem}\label{Thm-2.1}
Let $c(G)$ be a general chromatic colouring of a graph $G$ of order $n$ and let $U$ be the set of all vertices $u$ in $G$ such that $N[u]$ is rainbow neighbourhoods in $G$. Then, the general fading number is of $G$ is $f_\chi(G) \geq n-|\bigcup\limits_{u\in U} N[u]|$.
\end{theorem}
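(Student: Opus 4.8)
The plan is to exhibit one explicit fade set whose cardinality equals the claimed bound and then verify that it is admissible; since $f_\chi(G)$ is by definition a maximum over all admissible fade sets, this single construction immediately delivers the inequality. Write $W = \bigcup_{u\in U} N[u]$ for the union of the rainbow neighbourhoods, and take as the candidate fade set $F^\circ = V(G)\setminus W$, so that $|F^\circ| = n - |W|$, which is exactly the quantity appearing on the right-hand side. It then remains only to show that fading every vertex outside $W$ to transparent does not decrease $r_\chi$, i.e. that this $F^\circ$ meets the defining condition of a fade set. Note that I would not invoke the rainbow neighbourhood convention anywhere, since the colouring here is a \emph{general} chromatic colouring.

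The central observation is that for every $u \in U$ we have $N[u] \subseteq W$, directly from the definition of $W$ as a union taken over $U$. Consequently no vertex of $N[u]$ lies in $F^\circ$, so no vertex of $N[u]$ is faded, and each $N[u]$ retains all of its original solid colours. First I would record that, before any fading, $r_\chi(G) = |U|$. Then, since each $u \in U$ still has a solid-coloured representative of every colour class inside its untouched neighbourhood $N[u]$, every such $u$ continues to yield a rainbow neighbourhood after the fade. Hence the number of rainbow vertices of the faded graph is at least $|U| = r_\chi(G)$; and because fading a colour to transparent can only remove a colour from a closed neighbourhood and never introduce one, the count cannot rise either, so in particular it does not decrease. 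Thus $F^\circ$ is admissible and $f_\chi(G) \ge |F^\circ| = n - |W|$.

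The step I expect to require the most care is confirming that no colour class is emptied by the fade, so that the word ``rainbow'' continues to mean ``meets all $\chi(G)$ colour classes'' in the faded graph. This is settled by the same containment $N[u]\subseteq W$: provided $U \neq \emptyset$, any single $u\in U$ already witnesses an unfaded representative of each of the $\chi(G)$ colours inside $W$, so every colour class survives the fade. The degenerate case $U = \emptyset$ should be noted separately, but there $r_\chi(G)=0$ and $W=\emptyset$, so the bound reads $f_\chi(G)\ge n$, which holds trivially since $r_\chi$ of the faded graph is nonnegative. No genuine estimation is involved; the entire content of the proof is the remark that every vertex needed to certify a rainbow neighbourhood already lies in $W$, which leaves all of $V(G)\setminus W$ free to fade.
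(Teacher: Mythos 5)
Your proposal is correct and follows essentially the same route as the paper: the bound comes from fading exactly the vertices outside $\bigcup_{u\in U} N[u]$, which leaves every $N[u]$, $u\in U$, intact. In fact your writeup is more complete than the paper's own proof, which leaves this key admissibility check implicit (its only explicit argument, about repeated colours inside some $N[w]$, really just explains why the inequality can be strict), whereas you verify both that no rainbow vertex is lost and that no new one can be created, and you correctly observe that the degenerate case $U=\emptyset$ is harmless.
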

\begin{proof}~
Note that $|\bigcup\limits_{u\in U}N[u]|$ equals the exact number of vertices which belongs to some rainbow neighbourhood in $G$. Hence, we have $n \geq |\bigcup\limits_{u\in U}N[u]|$ and $n-|\bigcup\limits_{u\in U}N[u]|  \geq 0$. Since, for some $w\in U$, $c(N[w])$ may contain repetition of a colour, a corresponding vertex (or vertices) may fade to transparent. Therefore, $f_\chi(G)\ge n-|\bigcup\limits_{u\in U}N[u]|$.
\end{proof}

Note that the equality holds in Theorem \ref{Thm-2.1} when $N[u_i]\cap N[u_j]=\emptyset$ for all distinct pairs $u_i,u_j\in U$ and $|N[u]|=\chi(G)$, $\forall\,u\in U$.

It is obvious that a vertex which yields a rainbow neighbourhood cannot be an element in the set of faded vertices. 

In this context, we recall an important theorem from \cite{KNB}.

\begin{theorem}{\rm \cite{KNB}}
For cycle $C_n$ with $n$ is odd and $\ell = 0,1,2,\ldots$,  we have
\begin{enumerate}\itemsep0mm 
\item[(i)]~ $r^+_\chi(C_{7+4\ell}) = 3 +2(\ell + 1)$; and
\item[(ii)]~ $r^+_\chi(C_{9+4\ell}) = 3 +2(\ell + 1)$.
\end{enumerate}
\end{theorem}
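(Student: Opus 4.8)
The plan is to reduce the computation of $r^+_\chi(C_n)$ to a purely combinatorial question about cyclic sequences and then settle it by an explicit construction together with a matching counting bound. Write the cycle as $v_1v_2\cdots v_nv_1$ and recall that, since $n$ is odd, every chromatic colouring uses exactly $\chi(C_n)=3$ colours. For a cycle each closed neighbourhood $N[v_i]=\{v_{i-1},v_i,v_{i+1}\}$ has exactly three vertices, so $N[v_i]$ is a rainbow neighbourhood if and only if these three vertices carry all three colours; because the colouring is proper, this is equivalent to the single condition $c(v_{i-1})\neq c(v_{i+1})$. Thus maximising $r_\chi$ over all chromatic colourings is the same as choosing a proper $3$-colouring of $C_n$ that minimises the number of indices $i$ with $c(v_{i-1})=c(v_{i+1})$, and $r^+_\chi(C_n)=n-m^\ast$, where $m^\ast$ denotes that minimum.

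First I would encode a proper $3$-colouring by orientations. Regard the colours as $\mathbb{Z}_3$ with the cyclic order $1\to 2\to 3\to 1$ and assign to each edge $v_iv_{i+1}$ a sign according to whether the colour steps forward ($+$) or backward ($-$). A direct check shows that $v_i$ fails to be rainbow exactly when the two edges meeting at $v_i$ have opposite signs, so $m^\ast$ equals the minimum number of sign reversals of the cyclic sign pattern; in particular this number is always even. The closure constraint is that the signed steps sum to $0$ in $\mathbb{Z}_3$, that is $\#\{+\}-\#\{-\}\equiv 0 \pmod{3}$, and since $n$ is odd this difference is odd. This is precisely what can force reversals to appear and what ties $m^\ast$ to the residue of $n$.

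With the reduction in hand I would prove the two cases by an explicit optimal family together with induction on $\ell$. For the base cases I would write down chromatic colourings of $C_7$ and $C_9$ realising the claimed value $5$ (for $C_7$ I would use a pattern such as $1,2,3,1,2,3,2$, whose sign word has exactly two reversals) and then verify by the sign count that no colouring does better. For the inductive step I would show that a block of four vertices can be spliced into an optimal colouring of $C_n$, replacing one edge by a short coloured path, so that both the order and the rainbow count increase by exactly the required amounts, giving $r^+_\chi(C_{n+4})=r^+_\chi(C_n)+2$; running this from the two base cases then yields formulas (i) and (ii).

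The main obstacle will be the upper bound, namely proving that no chromatic colouring beats the constructed value. This amounts to showing that the minimum number $m^\ast$ of colour-repetition positions is exactly the value dictated by the closure constraint $\#\{+\}-\#\{-\}\equiv 0\pmod{3}$ together with the fact that a cyclic sign pattern of given composition minimises its reversals by collecting like signs into as few maximal blocks as possible. I would treat this as an optimisation over the number and sizes of those blocks. The most delicate point is to make the bound interact correctly with the $n\bmod 4$ parametrisation of the statement; here I would carefully double-check the interplay between $n\bmod 4$ and $n\bmod 3$, since that is exactly where an overlooked all-forward colouring (available whenever $3\mid n$, which occurs for some of the lengths in both families) could collapse $m^\ast$ to $0$ and change the count.
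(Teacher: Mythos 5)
Your reduction is sound and is exactly the right tool: for an odd cycle $N[v_i]$ is rainbow if and only if $c(v_{i-1})\neq c(v_{i+1})$; the $\pm$ encoding of a proper $3$-colouring as steps in $\mathbb{Z}_3$ is faithful; failures occur precisely at sign reversals, which come in pairs; and the only constraint is $\#\{+\}-\#\{-\}\equiv 0\pmod 3$. But carried to its conclusion, this machinery does not prove the quoted formula --- it refutes it. For every odd $n\geq 5$ with $3\nmid n$ you can take $p$ pluses and $m$ minuses with $p+m=n$, $p-m\equiv 0\pmod 3$, $p,m\geq 1$ (e.g.\ $p=n-2$, $m=2$ when $n\equiv 1\pmod 3$, and $p=n-4$, $m=4$ when $n\equiv 2\pmod 3$), laid out in one block of each sign; this gives exactly two reversals, hence $r^+_\chi(C_n)\geq n-2$, and when $3\mid n$ the all-forward colouring gives $r^+_\chi(C_n)=n$. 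Concretely, on $C_{11}$ (family (i), $\ell=1$) the colouring $1,2,3,1,2,3,1,2,1,3,2$ yields $9$ rainbow neighbourhoods, not the claimed $7$; on $C_9$ (family (ii), $\ell=0$) the colouring $1,2,3,1,2,3,1,2,3$ yields $9$, not $5$. So the ``matching counting bound'' you defer to the end cannot exist, and your splicing step $r^+_\chi(C_{n+4})=r^+_\chi(C_n)+2$ is also false: inserting four vertices raises the true value by $4$. You half-saw the problem --- your closing caveat about the all-forward colouring when $3\mid n$ --- but it is broader than you suspected: the formula $3+2(\ell+1)$ disagrees with the true value ($n-2$, or $n$ when $3\mid n$) for every $\ell\geq 1$ in family (i) and for all of family (ii); only $C_7$ matches.

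There is also no proof in this paper to compare against: the statement is quoted from the unpublished reference \cite{KNB}. In fact the paper is internally inconsistent with it --- the proof of Proposition \ref{Prop-2.2}(iii)(b) explicitly uses the repeating colouring $c_1,c_2,c_3,c_1,c_2,c_3,\ldots$ on $C_n$ with $n=3t$, under which every vertex yields a rainbow neighbourhood, so $r^+_\chi(C_9)=9$, contradicting item (ii) at $\ell=0$. What your method actually proves is the corrected statement: for odd $n\geq 5$, $r^+_\chi(C_n)=n$ if $3\mid n$ and $r^+_\chi(C_n)=n-2$ otherwise. The constructive half is the block colouring above; the upper-bound half is your own observation that reversals come in pairs, so a colouring with fewer than two reversals has none, which forces $3\mid n$. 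I would recast your write-up as a proof of that theorem, together with the counterexamples to the quoted one, rather than as an attempted proof of a false statement.
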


\begin{proposition}\label{Prop-2.2}
\begin{enumerate}\itemsep0mm 
\item[(i)]~ Let $\mu(G)$ be the Mycielski graph of $G$ of order $n \geq 1$, then $f^-(\mu(G))=f^+(\mu(G)) = n$.
\item[(ii)]~ For bipartite graphs, $f^-(G)=f^+(G)=0$.
\item[(iii)]~ For odd cycles, we have
\begin{enumerate}\itemsep0mm 
\item[(a)]~ $C_3$ and $C_5$, we have $f^-(C_3)=f^-(C_5)=0$ and for $n\geq 7$, we have $f^-(C_n)=n-5$.
\item[(b)]~ For all odd cycles $C_n$, we have $f^+(C_n)=0$.
\end{enumerate}
\item[(iv)]~ Thorn graphs $G^\star$ of $\chi$-colourable graphs $G$ with $\chi(G) \geq 3$, we have
\begin{enumerate}\itemsep0mm 
\item[(a)]~ $f^-(G^\star) \leq f^-(G) + \sum\limits_{i=1}^{\ell}t_i$ and,
\item[(b)]~ $f^+(G^\star) \leq f^+(G) + \sum\limits_{i=1}^{\ell}t_i$, where $t_i \geq 1$ is the number of thorns added to some vertices $v_i \in V(G)$, $1\leq i \leq \ell \leq n$.
\end{enumerate}
\end{enumerate}
\end{proposition}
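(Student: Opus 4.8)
The plan is to run all four parts off a single criterion. Two facts are already in hand: a vertex yielding a rainbow neighbourhood can never lie in a fade set, and (as in the proof of Theorem~\ref{Thm-2.1}) fading a vertex only \emph{removes} colours from the closed neighbourhoods containing it, so it can never create a new rainbow neighbourhood provided no colour class is emptied. Hence a fade set $F^\circ$ is admissible precisely when each of its vertices is non-rainbow \emph{and} is not the unique bearer of some colour inside $N[u]$ for any surviving rainbow vertex $u$; maximising $|F^\circ|$ reduces to counting the vertices that are neither rainbow nor such \emph{critical providers}. I would use the rainbow-neighbourhood-convention colouring whenever the parameter is $r^-_\chi$ and a maximum colouring whenever it is $r^+_\chi$. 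Part (ii) is then immediate: in a proper $2$-colouring of a connected bipartite graph every vertex $v$ has a neighbour in the opposite part, so $N[v]$ already meets both colour classes and $v$ is rainbow; thus $U=V(G)$, nothing can fade, and $f^-(G)=f^+(G)=0$.

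For (iii) I would first record that the convention colouring $c_1c_2\cdots c_1c_2c_3$ of an odd cycle yields exactly three rainbow vertices, the unique $c_3$-vertex together with its two neighbours, and that the union of their closed neighbourhoods is precisely five consecutive vertices. For $C_3$ and $C_5$ these five (or fewer) vertices exhaust the cycle and every one is a critical provider, so $f^-=0$; for $n\ge 7$ Theorem~\ref{Thm-2.1} gives $f^-(C_n)\ge n-5$, and the matching upper bound holds because a minimum colouring must keep three rainbow vertices together with their colour providers, which are exactly these five vertices and none of which may fade. For (b) I would invoke a maximum colouring, the near-periodic pattern built from $c_1c_2c_3$ described by the cited theorem on $r^+_\chi(C_n)$; since every cycle vertex has degree two, a rainbow vertex has $|N[u]|=3=\chi(C_n)$, so each of its two neighbours is the unique provider of a colour. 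As each non-rainbow vertex in this colouring is adjacent to a rainbow vertex, fading it destroys a rainbow neighbourhood, whence $f^+(C_n)=0$.

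For (i) I would classify the vertices of $\mu(G)$ into the originals $v_i$, the shadows $u_i$ and the apex $w$, and note that the extra colour $c_{\chi(G)+1}$ can be confined to the apex side, so that no $v_i$ is ever rainbow (its closed neighbourhood misses $c_{\chi(G)+1}$). The plan is to exhibit, for a minimum and for a maximum colouring respectively, a configuration in which fading the $n$ originals destroys no surviving rainbow neighbourhood, giving $f^-(\mu(G)),f^+(\mu(G))\ge n$, and to get the reverse inequality from the fact that the apex together with a colour-distinguishing set of shadows must be retained. The delicate point is to secure this count \emph{uniformly} over the extremal colourings; one must check carefully which shadows are genuinely non-redundant, since small base graphs can behave exceptionally.

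Finally, for (iv) the thorns are the easy contribution: a pendant thorn $p$ has $N[p]=\{p,v_i\}$, only two colours, so with $\chi(G)\ge 3$ no thorn is rainbow and all $\sum_{i}t_i$ of them may fade. For the upper bound I would take an optimal fade set $F^\circ$ of $G^\star$, split it as $F^\circ=(F^\circ\cap V(G))\cup(F^\circ\cap T)$ with $T$ the thorn set, bound $|F^\circ\cap T|\le\sum_{i}t_i$ trivially, and argue that $F^\circ\cap V(G)$ is admissible for $G$, so $|F^\circ\cap V(G)|\le f^-(G)$ (respectively $f^+(G)$); summing gives the claimed inequalities. I expect this last correspondence to be the main obstacle. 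Passing from $G^\star$ back to $G$, a thorn can supply to a thorn-bearing vertex $v_i$ a colour absent from $N_G[v_i]$, so a vertex may be rainbow in $G^\star$ but not in $G$; the argument must show that deleting the thorns and restricting $F^\circ$ cannot turn an admissible set for $G^\star$ into an inadmissible one for $G$. This is exactly where the hypothesis $\chi(G)\ge 3$ and the leaf structure of the thorns must be used, and it is the step I would spend the most care on.
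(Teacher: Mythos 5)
Your parts (ii) and (iii) are correct and essentially follow the paper's own route: the paper likewise quotes the fact that every vertex of a connected bipartite graph yields a rainbow neighbourhood, uses the colouring $c_1c_2c_1c_2c_3$ of $C_5$ together with the five retained consecutive vertices and induction for $f^-(C_n)=n-5$, and exhibits near-periodic $c_1c_2c_3$-colourings for $f^+(C_n)=0$; your ``critical provider'' criterion actually makes that last step more transparent than the paper's bare ``gives the result''. The genuine gaps are in (i) and (iv): in both you only describe what a proof would have to do and explicitly defer the decisive step, so neither part is established by your text.

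For (i) the deferred step is not merely delicate---it cannot be carried out. As you suspected, shadow vertices can be rainbow: $N[x_i]=\{x_i,w\}\cup N_G(v_i)$ contains $c_{\chi(G)+1}$, and whenever $\{x_i\}\cup N_G(v_i)$ carries all of $c_1,\dots,c_{\chi(G)}$ (as happens for every $x_i$ when $G=K_n$) the vertex $x_i$ is rainbow, with each colour in $N[x_i]$ supplied exactly once, mostly by original vertices; fading the $n$ originals then destroys these rainbow neighbourhoods and the count drops. Concretely, $\mu(K_2)\cong C_5$, so part (i) with $n=2$ would give $f^-(C_5)=2$, contradicting part (iii)(a), $f^-(C_5)=0$, which you proved: the statement of (i) is itself false for small $G$, and the paper's own proof only ``succeeds'' by asserting, incorrectly in general, that $w$ is the sole rainbow vertex of $\mu(G)$. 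In (iv), the step you flag---that $F^\circ\cap V(G)$ remains an admissible fade set for $G$---is exactly what is missing, and it is genuinely problematic, since a thorn can make $v_i$ rainbow in $G^\star$ while $v_i$ is not rainbow in $G$, so rainbow counts do not restrict cleanly; the paper avoids this route entirely, instead colouring all thorns with $c_1,c_2$, noting that $\chi(G)\ge 3$ prevents any thorn from being rainbow (which yields its lower bound $f^-(G^\star)\ge f^-(G)+1$), and then asserting the stated upper bounds with essentially no argument. So, as written, your proposal proves (ii) and (iii) but neither (i) nor (iv).
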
 
\begin{proof}~
\begin{enumerate}\itemsep0mm 
\item[(i)]~ The Mycielski graph or or Mycielskian of a graph $G$, denoted by $\mu(G)$, is the simple connected graph with $V(\mu(G))=V(G) \cup \{x_1,x_2, x_3, ..., x_n\} \cup \{w\}$ and $E(\mu(G))=E(G)\cup \{v_ix_j, v_jx_i|$ if and only if $v_iv_j \in E(G)\} \cup \{wx_i| \forall i\}$ with the known meaning of vertices $x_i$, $1\leq i\leq n$ and $w$. We know that $\chi(\mu(G))=\chi(G)+1$ with $c(w) = c_{\chi(G) +1}$. Therefore, only vertex $w$ yields a rainbow neighbourhood in $G$. Since $w$ is adjacent to all $x_i$, $1\leq i \leq n$ and is not adjacent to any $v\in V(G)$, all solid colours assigned to $v\in V(G)$ may fade to $c^\circ$. Hence the result.

\item[(ii)]~ For bipartite graphs $G$ each vertex $v \in V(G)$ yields a rainbow neighbourhood (see \cite{KNJ} for the proof). Hence, we have $f^-(G)=f^+(G)=0$.

\item[(iii)]~ Consider an odd cycle $C_n$. Then, we have 

\begin{enumerate}\itemsep0mm
\item[(a)]~ Since, $C_3$ is a complete graph the result is trivial. For $C_5$, consider the cyclically colouring $c(v_1) = c_1$, $c(v_2) = c_2$, $c(v_3) = c_1$, $c(v_4) = c_2$ and $c(v_5) = c_3$. Clearly, vertices $v_1,v_4,v_5$ yield rainbow neighbourhoods to result in, $r^-\chi(C_5) = 3$. The $5$-path, $v_{n-2}v_{n-1}v_nv_1v_2$ has to retain solid colours to ensure no decrease in $r^-_\chi(P_5)$ if fading effects at some vertices. Therefore the result for $C_5$. Through immediate induction the result holds for all $n \geq 7$ and $n$ is odd.

\item[(b)]~ Consider the conventional vertex labeling of a cycle $C_n$ to be consecutively and clockwise, $v_1,v_2,v_3,\ldots ,v_n$. For the cycle $C_3$, the result is obvious. For cycle $C_5$, the colouring $c(v_1) = c_1$, $c(v_2) = c_2$, $c(v_3) =c_3$, $c(v_4) = c_1$, $c(v_5)  =c_2$ gives the result.

\noindent For $n\geq 7$ consider three classes of odd cycles.

\begin{enumerate}\itemsep0mm 
\item[(i)]~ For For $C_n$, $n = 3t$, $t\geq 3$ The colouring $c(v_1) = c_1$, $c(v_2) = c_2$, $c(v_3) =c_3, \cdots,~c(v_{n-2}) = c_1$, $c(v_{n-1}) = c_2$, $c(v_n)=c_3$ gives the result.

\item[(ii)]~ For $C_n$, $n = 7 + 4\ell$, $n \neq 3t$, $t\geq 3$, $\ell = 0,1,2,\ldots$ consider the colouring, $c(v_1) = c_1$, $c(v_2) = c_2$, $c(v_3) =c_3,\cdots,~ c(v_{n-1}) = c_3$, $c(v_n) = c_2$ and the result follows through immediate induction.
\item[(iii)]~ For $C_n$, $n = 9 + 4\ell$, $n \neq 3t$, $t\geq 3$, $\ell = 0,1,2,\ldots$ consider the colouring, $c(v_1) = c_1$, $c(v_2) = c_2$, $c(v_3) =c_3,\cdots,~ c(v_{n-2}) = c_3$, $c(v_{n-1}) = c_1$, $c(v_n) = c_2$ and the result follows through immediate induction.
\item[(iv)]~ Consider a thorn graph $G$ such that every internal vertex $v_i$ has $t_i\ge 1$ pendant edges (thorns) attached to it. Then, we have
\end{enumerate}
\begin{enumerate}\itemsep0mm 
\item[(a)]~ Clearly and without loss of generality, all thorns $t_i$, $2\leq i\leq \ell$ may be coloured $c_1$ and thorns $t_1$ may be coloured say, $c_2$. Since $\chi(G)\geq 3$, no thorn vertex can yield a rainbow neighbourhood. Also, a vertex in the corresponding fade set $F^\circ(G)$ cannot yield a rainbow neighbourhood. Thus, since $t_i \geq 1$ it implies that $f^-(G^\star) \geq f^-(G) +1$. However additional vertices can yield rainbow neighbourhoods in $G^\star$ whilst not yielding in $G$. Therefore, $f^-(G^\star) \leq f^-(G) + \sum\limits_{i=1}^{\ell}t_i$.\\
\item[(b)]~ Similar reasoning as that in (vi)(a) applies.
\end{enumerate}
\end{enumerate}
\end{enumerate}

This completes the proof.
\end{proof}

\noi Note that $0 \leq f^+(G)\leq f^-(G) \leq n-1$.

\begin{lemma}\label{Lem-2.3}
For any graph $G$, we have $f^-(G+K_1)= f^-(G)$ and $f^+(G+K_1)= f^+(G)$.
\end{lemma}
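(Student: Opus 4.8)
The plan is to exploit that in the join $G+K_1$ the vertex $w$ of $K_1$ is \emph{universal}: it is adjacent to every vertex of $G$. First I would record the two structural facts this forces. Since $w$ is adjacent to all of $V(G)$, its colour differs from every colour appearing on $G$ and is borne by $w$ alone; hence $\chi(G+K_1)=\chi(G)+1$ and, under the rainbow neighbourhood convention and the minimum parameter colouring, $\{w\}$ is a singleton colour class receiving the top colour $c_{\chi(G)+1}$. Consequently every chromatic colouring of $G+K_1$ restricts to a chromatic colouring of $G$, and conversely every chromatic colouring of $G$ extends uniquely by colouring $w$ with $c_{\chi(G)+1}$. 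This bijection between the chromatic colourings of $G+K_1$ and of $G$ is what will carry the whole argument.

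Next I would prove an increment valid for an arbitrary graph $X$, namely $r^-_\chi(X+K_1)=r^-_\chi(X)+1$ and $r^+_\chi(X+K_1)=r^+_\chi(X)+1$. The apex $w$ always yields a rainbow neighbourhood, since $N[w]=V(X+K_1)$ meets every colour class. For a vertex $v\in V(X)$ we have $N_{X+K_1}[v]=N_X[v]\cup\{w\}$, so $w$ already contributes $c_{\chi(X)+1}$ to $N[v]$; therefore $v$ yields a rainbow neighbourhood in $X+K_1$ if and only if $N_X[v]$ meets each of $c_1,\dots,c_{\chi(X)}$, that is, if and only if $v$ yields a rainbow neighbourhood in $X$. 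Under the colouring bijection the vertices yielding rainbow neighbourhoods in $X+K_1$ are thus exactly those of $X$ together with $w$, so the count is larger by precisely one on matched colourings. Taking the minimum and then the maximum over the matched colourings yields both increments.

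I would then transfer this to fade sets, applying the increment to both $G$ and the faded subgraph. Because $w$ yields a rainbow neighbourhood in every chromatic colouring of $G+K_1$, the observation already recorded in the paper (a vertex yielding a rainbow neighbourhood is never an element of the fade set) gives $w\notin F^\circ$, so every fade set of $G+K_1$ is a subset $F^\circ\subseteq V(G)$. For such $F^\circ$, the universality of $w$ gives $\langle V(G+K_1)-F^\circ\rangle=H+K_1$, where $H:=\langle V(G)-F^\circ\rangle$. Applying the increment to $H$ now gives $r^-_\chi(H+K_1)=r^-_\chi(H)+1$, and likewise for $r^+_\chi$, so the defining equality $r^-_\chi(G+K_1)=r^-_\chi(\langle V(G+K_1)-F^\circ\rangle)$ reads $r^-_\chi(G)+1=r^-_\chi(H)+1$, i.e. $r^-_\chi(G)=r^-_\chi(\langle V(G)-F^\circ\rangle)$, which is exactly the condition defining an admissible fade set of $G$; the same holds verbatim for $r^+_\chi$. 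Hence the admissible fade sets of $G+K_1$ are precisely those of $G$, and maximising cardinalities gives $f^-(G+K_1)=f^-(G)$ and $f^+(G+K_1)=f^+(G)$.

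The step I expect to be the main obstacle is twofold. First, the clean exclusion of $w$ from the fade set: one must be sure that fading $w$ cannot be compensated, for removing $w$ deletes the entire colour class $c_{\chi(G)+1}$ and strips that colour from every closed neighbourhood, so that $\langle V(G+K_1)-F^\circ\rangle$ collapses onto a configuration supported entirely on $G$ and cannot attain the target value $r^\pm_\chi(G)+1=r^\pm_\chi(G+K_1)$; this is precisely where I would lean on the cited observation about rainbow-yielding vertices, applied to $w$. Second, I would need to check that the colouring bijection respects extremality, so that the minimum (respectively maximum) rainbow count is attained on matched colourings and the $+1$ passes cleanly to $r^-_\chi$ and $r^+_\chi$ themselves rather than to some intermediate, non-extremal colouring.
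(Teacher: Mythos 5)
Your proposal is correct and takes essentially the same route as the paper's proof: both arguments rest on the apex $w$ receiving a new colour borne by it alone, hence always yielding a rainbow neighbourhood and never being eligible for the fade set, with fade sets then transferring intact between $G$ and $G+K_1$. Your version is in fact tighter than the paper's --- the explicit increment $r^{\pm}_\chi(X+K_1)=r^{\pm}_\chi(X)+1$, applied both to $G$ and to the faded subgraph $H=\langle V(G)-F^\circ\rangle$, supplies the bookkeeping that the paper compresses into the bare assertions that fadable vertices of $G$ remain fadable in $G+K_1$ and that the $K_1$ vertex cannot fade.
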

\begin{proof}~
Let the graph $G$ permit the chromatic colouring on the colours $\mathcal{C}=\{c_1,c_2,c_3,\ldots,c_{\chi(G)}\}$. The result is obvious for a trivial graph. Hence, assume that $G$ is a non-trivial graph. Therefore, since $G$ is connected, we have $\chi(G) \geq 2$. Certainly, the graph $G'=K_1+G$ requires the chromatic colouring on the colour set $\{c_1,c_2,c_3,\ldots,c_{\chi(G)},c_{\chi(G)+1}\}$. Without loss of generality, let $c(K_1) = c_{\chi(G)+1}$. Obviously, any vertex $v\in V(G)$ that may fade to  a transparent vertex in $G$, may fade in $G'$ also. Therefore, $f^-(G') \geq f^-(G)$ and $f^+(G') \geq f^+(G)$. Since the vertex corresponding to $K_1$ also yields a rainbow neighbourhood, it cannot fade to a transparent vertex. Hence, $f^-(G') \leq f^-(G)$  and $f^+(G') \leq f^+(G)$ This implies $f^-(G')=f^-(G)$ and $f^+(G')=f^+(G)$. Hence the result.
\end{proof}

Lemma \ref{Lem-2.3} implies that if multiple, say $t$, copies of $K_1$ are joined to $G$ then, $f^-(t\cdot K_1+G)=f^-(G)$ and $f^+(t\cdot K_1 + G)=f^+(G)$. 

\noi The \textit{generalised  windmill}-on-$G$, denoted by $W^{(m)}_G$, is the graph defined by $W^{(m)}_G = K_1 + \bigcup\limits_{i=1}^{m}G_i$ with $G_i\simeq G$, $1\leq i\leq m$. Lemma \ref{Lem-2.3} implies that $f^-(W^{(m)}_G) = m\cdot f^-(G)$ and $f^+(W^{(m)}_G) = m \cdot f^+(G)$. It is also to be noted that all known windmill graphs are windmill-on-$G$ for some $G$. The Dutch windmill graph is simply $m$ copies of a cycle $C_n$, $n\geq 3$ which share a common cycle vertex.

\noi Let us now recall the definitions of the join and corona of two graphs. 

The \textit{join} of two graphs $G_1$ and $G_2$ with disjoint vertex sets $V_1$ and $V_2$, and edge sets $E_1$ and $E_2$ is the graph union $G_1\cup G_2$ together with all the edges joining $V_1$ and $V_2$ (see \cite{FH}). The join of $G_1$ and $G_2$ is denoted by $G_1+G_2$. 

The {\em corona} of two graphs $G_1$ and $G_2$, denoted by $G_1\circ G_2$, is the graph obtained by taking one copy of $G_1$ (which has $n_1$ vertices) and $n_1$ copies of $G_2$ and then joining the $i$-th vertex of $G_1$ to every point in the $i$-th copy of $G_2$ (see \cite{FH}).

\noi The next results is on the fading numbers of the join and corona of two graphs.

\begin{theorem}\label{Thm-2.4}
Consider two graphs $G$ and $H$ of order $n_1,n_2$, respectively. Then, we have
\begin{enumerate}\itemsep0mm
\item[(i)]~ $f^-(G+H) = f^-(G)+f^-(H)$ and $f^+(G+H)=f^+(G)+f^+(H)$.
\item[(ii)]~ 
\begin{enumerate}\itemsep0mm
\item[(a)]~ $f^-(G\circ H) = n_1\cdot f^-(H))$ and $f^+(G\circ H) = n_1\cdot f^+(H))$, if $\chi(H) \geq \chi(G)-1$; else
\item[(b)]~ $f^-(G\circ H) \leq f^-(G) + n_1\cdot n_2$ and $f^+(G\circ H) \leq f^+(G)+ n_1\cdot n_2$. 
\end{enumerate}
\end{enumerate}
\end{theorem}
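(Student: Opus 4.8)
The plan is to reduce every case to two structural facts: how the palette of the compound graph splits between the two factors, and how rainbow membership of a vertex in the compound graph can be read off from its behaviour inside the factor it belongs to. Throughout I treat a faded vertex as one that retains the original colour count $\chi(\cdot)$ but contributes no colour to its neighbours, so that ``no decrease in $r_\chi$'' is equivalent to ``every originally rainbow vertex stays rainbow'' (fading only deletes colour occurrences, so the rainbow count can never rise).

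For (i), I would first record that in the join every vertex of $G$ is adjacent to every vertex of $H$, forcing disjoint palettes and $\chi(G+H)=\chi(G)+\chi(H)$, with $G$ coloured by $c_1,\dots,c_{\chi(G)}$ and $H$ by the remaining colours. Then $N_{G+H}[u]=N_G[u]\cup V(H)$ for $u\in V(G)$, so the $H$-colours are automatically present and $u$ is rainbow in $G+H$ iff $u$ is rainbow in $G$; symmetrically for $H$. Writing a fade set as $F=F_G\sqcup F_H$, the requirement that every rainbow vertex survive splits into four conditions: each surviving $G$-vertex (resp. $H$-vertex) must keep all $G$-colours (resp. $H$-colours) inside its own factor-neighbourhood, and the opposite factor must still display all of its colours. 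Because each factor contains at least one rainbow vertex, these are precisely the conditions defining validity of $F_G$ in $G$ and of $F_H$ in $H$; hence $F$ is valid for $G+H$ iff $F_G,F_H$ are valid for $G,H$, and maximising independently gives $f^-(G+H)=f^-(G)+f^-(H)$, and likewise $f^+$ using max-colourings.

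For (ii)(a), with $\chi(H)\ge\chi(G)-1$ one has $\chi(G\circ H)=\chi(H)+1$. Colouring each copy $H_i$ with the $\chi(H)$ colours other than $c(v_i)$, the set $\{v_i\}\cup V(H_i)$ already displays all $\chi(H)+1$ colours, so every $v_i$ is rainbow and cannot fade, while a vertex of $H_i$ is rainbow iff it is rainbow in its copy. Since $H_i$ meets the rest of the graph only through $v_i$, fading decouples across the $n_1$ copies, and on $\{v_i\}\cup V(H_i)\cong K_1+H$ the governing constraint is keeping the rainbow vertices of $H_i$ rainbow; this already forces $V(H_i)\setminus F$ to exhibit all $\chi(H)$ colours, which in turn keeps $v_i$ rainbow for free. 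So the maximal fade inside each copy is exactly $f^-(K_1+H)=f^-(H)$ by Lemma~\ref{Lem-2.3}, and summing over the $n_1$ copies yields $f^-(G\circ H)=n_1 f^-(H)$, and similarly $f^+$.

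For (ii)(b), where $\chi(H)<\chi(G)-1$ and $\chi(G\circ H)=\chi(G)$, the closed neighbourhood of any copy-vertex displays at most $\chi(H)+1<\chi(G)$ colours, so no copy-vertex is ever rainbow and all $n_1 n_2$ of them are eligible to fade, accounting for the additive $n_1 n_2$. It then remains to bound the faded vertices of the $G$-copy by $f^-(G)$ (resp. $f^+(G)$). A faded $v_i$ is non-rainbow in $G\circ H$, hence non-rainbow in $G$ by monotonicity, so $F_G$ lands among the non-rainbow vertices of $G$; the obstacle is that a rainbow $v_j$ may, inside $G\circ H$, draw a colour from its attached copy $H_j$ that it lost in $G$ once $F_G$ was faded, so $F_G$ need not by itself be a valid fade set of $G$. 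The resolution I expect is an exchange argument: whenever fading a $G$-vertex beyond what $G$ alone permits strips a colour $c_k$ from a surviving rainbow $v_j$, that colour must be resupplied by a retained vertex of $H_j$, so each excess $G$-fade is paid for by a copy-vertex that may no longer fade. Charging retained copy-vertices injectively against excess $G$-fades keeps the total at most $f^-(G)+n_1 n_2$. Making this charging precise, and in particular establishing its injectivity, is the main difficulty of the whole theorem; the remaining cases are essentially bookkeeping once the rainbow-membership dichotomy is in place.
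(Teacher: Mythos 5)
Your parts (i) and (ii)(a) are correct and follow essentially the paper's own route: for the join, disjoint palettes plus the observation that rainbow membership and fadeability transfer unchanged between each factor and $G+H$; for the corona with $\chi(H)\ge\chi(G)-1$, recolouring each copy away from the colour of its attachment vertex so that $\{v_i\}\cup V(H_i)$ behaves as $K_1+H$, then applying Lemma~\ref{Lem-2.3} copy by copy.

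The genuine gap is in (ii)(b), exactly where you flag it: your reduction needs an injective charging of excess faded $G$-vertices to retained copy-vertices, and you never construct it. The concern is real, not pedantic, because the naive bound $|F\cap V(G)|\le f^-(G)$ is false. Take $G=C_5$, $H=K_1$ (so $\chi(H)=1<\chi(G)-1$), colour the cycle $c(v_1)=c_1$, $c(v_2)=c_2$, $c(v_3)=c_1$, $c(v_4)=c_2$, $c(v_5)=c_3$, and colour the pendants $c(u_1)=c_2$, $c(u_2)=c_1$, $c(u_3)=c_2$, $c(u_4)=c_1$, $c(u_5)=c_1$; then $r^-_\chi(C_5\circ K_1)=3$, and the fade set $\{v_2,v_3,u_2,u_3,u_5\}$ of size $5$ is valid, since $v_1$, $v_4$, $v_5$ stay rainbow via the retained pendants $u_1$, $u_4$. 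So a valid fade set can contain two cycle vertices even though $f^-(C_5)=0$: the compensation phenomenon you describe actually occurs, and the inequality only survives because each excess cycle fade forces a pendant to be retained. Moreover, the injectivity you worry about is the genuinely delicate point: several colours stripped from one surviving $v_j$ must be resupplied by several distinct retained vertices of the single copy $H_j$, and a single faded $G$-vertex may simultaneously strip colours from many different $v_j$. You should know, however, that the paper does not carry out any such argument: its proof of (ii)(b) only establishes that no copy-vertex can be rainbow (your colour count $\chi(H)+1<\chi(G)$), that all $n_1 n_2$ copy-vertices may fade, and that non-rainbow vertices of $G$ may become rainbow in $G\circ H$, and from these facts it asserts the inequality directly, with the same leap you refused to make. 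So your proposal is more candid about the true difficulty, but as written it proves strictly less than the statement; to finish, you must either make the charging precise or fall back on the paper's coarser accounting.
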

\begin{proof}~ 
Part (i): Without loss of generality, let $G$ permit a chromatic colouring on the colours $\mathcal{C}_1=\{c_1,c_2,c_3,\ldots,c_{\chi(G)}\}$ and let $H$ permit a chromatic colouring on the colours $\mathcal{C}_2=\{c_{\chi(G)+1},c_{\chi(G)+2},c_{\chi(G)+3}, \ldots, c_{\chi(G)+\chi(H)}\}$. Therefore, the graph $G+H$ will permit the chromatic colouring on the colours $\mathcal{C}=\{c_1,c_2,c_3,\ldots,c_{\chi(G)},c_{\chi(G)+1},\\ c_{\chi(G)+2},c_{\chi(G)+3}, \ldots, c_{\chi(G)+\chi(H)}\}$. Clearly, any vertex $v \in V(G)$, that yields a rainbow neighbourhood in $G$, also yields a rainbow neighbourhood in $G+H$ and vice versa. Also, any vertex $u \in V(G)$ that does not yield a rainbow neighbourhood in $G$, cannot yield a rainbow neighbourhood in $G+H$ and vice versa. Similarly, any vertex $v \in V(G)$ that may fade to a transparent vertex in $G$  may fade to transparent in $G+H$ and vice versa. Therefore, the results for both, $f^-(G+H)$, $f^+(G+H)$.

Part (ii)(a): For any vertex $v \in V(G)$ with $c(v) = c_i$ recolour all vertices $u \in V(H)$ which have $c(u) = c_i$ to the colour $c_{\chi(H) + 1}$. Then the first part of the result i.e. for $\chi(H) \geq \chi(G)-1$, is a direct consequence of Lemma \ref{Lem-2.3}. 

Part (ii)(b): For the second part it is clear that all vertices $v \in V(G)$ that yield a rainbow neighbourhood will yield a rainbow neighbourhood in $G\circ H$. Therefore, $r_\chi(G\circ H) \geq r_\chi(G)$. 

It also follows that no vertex $w \in V(H)$ can yield a rainbow neighbourhood in $G\circ H$. To show the aforesaid, assume that the vertex $w \in V(H)$ of the $t$-th copy of $H$ joined to $v \in V(G)$ is a vertex yielding a rainbow neighbourhood in $G\circ H$.  It means that vertex $w$ has at least one neighbour for each colour $c_i,\ 1 \leq i \leq \chi(H) < \chi(G)-1$ as well as the neighbour $v$ with, without loss of generality, the colour $c(v)=c_{\chi(H)+1}$. Since, $c_{\chi(H)+1}$ can at best be the colour $c_{\chi(G)-1}$, the colour $c_{\chi(G)} \notin N[w]$ in $r_\chi(G\circ H)$, which is a contradiction. Therefore, $r_\chi(G\circ H) = r_\chi(G)$.

Hence, $f^-(G\circ H) \geq f^-(G)$ and $f^+(G\circ H) \geq f^+(G)$. Clearly, for any copy of graph $H$, each vertex $w \in V(H)$ may fade to a transparent vertex because any colour $c(w)\in c(G)$. A vertex in $V(G)$, that did not yield a rainbow neighbourhood in $G$, may yield a rainbow neighbourhood in $G\circ H$ under the chromatic colouring of the copies of $H$ and hence it follows that, $f^-(G\circ H) \leq f^-(G) + n_1\cdot n_2$ and $f^+(G\circ H) \leq f^+(G)+ n_1\cdot n_2$. 
\end{proof}

\noi Next, we have our important result as follows:

\begin{theorem}\label{Thm-2.5}
For a graph $G$ of order $n$ we have
\item[(i)]~ $f^-(G)>0$ if and only if $r_\chi^-(G)<n$;
\item[(ii)]~ $f^+(G)>0$ if and only if $r_\chi^+(G)<n$.
\end{theorem}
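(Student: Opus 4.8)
The plan is to prove each of the two biconditionals by treating its two implications separately, doing (i) in full since (ii) is verbatim the same with the minimising colouring behind $r_\chi^-(G)$ replaced by a maximising colouring behind $r_\chi^+(G)$. Throughout I would lean on two facts already recorded in the text: a vertex that yields a rainbow neighbourhood can never be faded, and fading only deletes colours from closed neighbourhoods, so it can never turn a non-rainbow vertex into a rainbow one. The forward implication $f^-(G)>0\Rightarrow r_\chi^-(G)<n$ is then a one-line count. Fix a chromatic colouring realising $r_\chi^-(G)$ and let $F^\circ(G)$ be a largest fade set, so $|F^\circ(G)|=f^-(G)\ge 1$. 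Every vertex still yielding a rainbow neighbourhood after fading lies in $V(G)-F^\circ(G)$, so the rainbow neighbourhood number of the faded graph is at most $n-|F^\circ(G)|\le n-1$ (any graph has at most as many rainbow vertices as vertices); by the defining equality of a fade set this number equals $r_\chi^-(G)$, whence $r_\chi^-(G)\le n-1<n$. The identical count yields (ii).

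For the converse $r_\chi^-(G)<n\Rightarrow f^-(G)>0$ I would first record a monotonicity remark: if a fade set is valid (its removal does not decrease the rainbow neighbourhood number) then so is every subset, because fading fewer vertices deletes fewer colours and hence cannot demote any vertex that already survived the larger fade set. Consequently it suffices to produce a \emph{single} fadeable vertex. Now fix a colouring obeying the rainbow neighbourhood convention that attains $r_\chi^-(G)$; since $r_\chi^-(G)<n$, some vertex $v$ fails to yield a rainbow neighbourhood, i.e. at least one colour class is absent from $N[v]$, and such a $v$ is admissible for fading. Fading $v$ can only injure a rainbow vertex $u\in N(v)$ for which $v$ is the \emph{unique} carrier of the colour $c(v)$ inside $N[u]$, so the target is a non-rainbow vertex whose colour is redundant in every adjacent rainbow neighbourhood, for instance one having no rainbow neighbour at all.

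The main obstacle is exactly this last existence claim. The forward direction is pure counting, but the converse needs a guarantee that whenever $r_\chi^-(G)<n$ at least one non-rainbow vertex is \emph{safely} fadeable; fading a carelessly chosen non-rainbow vertex may remove the last occurrence of its colour from an adjacent rainbow neighbourhood and thereby push the count strictly below $r_\chi^-(G)$. Controlling this cascade --- proving that such a safe vertex must exist under the sole hypothesis $r_\chi^-(G)<n$ --- is the delicate core of the argument. Short odd cycles, where every non-rainbow vertex is the unique supplier of some colour to an adjacent rainbow vertex, are precisely the configurations that make this step hardest, and any proposed proof of the converse (and the analogous statement for $f^+$ and $r_\chi^+$) should be stress-tested against them first.
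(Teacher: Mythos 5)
Your forward implication ($f^-(G)>0\Rightarrow r_\chi^-(G)<n$) is correct and is in substance the paper's own argument for that direction: a fadeable vertex can never yield a rainbow neighbourhood, so the rainbow count must fall short of $n$. The genuine gap is the converse, and you have named it yourself: you never actually produce a ``safely fadeable'' vertex, you only observe that one would suffice and that the danger is a non-rainbow vertex which is the \emph{unique} carrier of its colour inside some adjacent rainbow neighbourhood. As a proof, the proposal is therefore incomplete.

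However, your diagnosis of where the difficulty lies is not a loose end that more work could tie up --- it is fatal, and the stress test you propose detects this. Take $C_5$ with its essentially unique chromatic colouring $c(v_1)=c_1$, $c(v_2)=c_2$, $c(v_3)=c_1$, $c(v_4)=c_2$, $c(v_5)=c_3$: the rainbow vertices are $v_1,v_4,v_5$, and each non-rainbow vertex is the unique carrier of its colour in an adjacent rainbow neighbourhood ($v_2$ carries $c_2$ alone in $N[v_1]=\{v_5,v_1,v_2\}$, and $v_3$ carries $c_1$ alone in $N[v_4]=\{v_3,v_4,v_5\}$), so neither may fade. Hence $f^-(C_5)=0$ while $r_\chi^-(C_5)=3<5$; this is exactly the paper's own Proposition \ref{Prop-2.2}(iii)(a), which thus contradicts Theorem \ref{Thm-2.5}(i). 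Likewise $f^+(C_n)=0$ for all odd cycles by Proposition \ref{Prop-2.2}(iii)(b), while $r_\chi^+(C_7)=5<7$, contradicting part (ii). The paper's printed proof breaks down precisely at your unique-carrier case: there it asserts that such a $v$ either shows the colouring was not minimum (so $v$ can be recoloured and faded) or is itself adjacent to all colours, forcing $r_\chi^-(G)=n$; neither alternative holds for $v_2$ in $C_5$, where the colouring is minimum, no recolouring is available, and $N[v_2]$ misses $c_3$. The concluding conjecture of the paper, which explicitly excludes odd cycles, tacitly concedes the point. So the verdict is: your proposal has a gap, but no correct completion exists --- the statement itself is false as written, and any repaired version must exclude at least the short odd cycles you flagged.
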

\begin{proof}~
\textit{Part (i):} Consider a graph $G$ of order $n$ with $r_\chi^-(G)< n$. Consider any vertex $v$ that does not yield a rainbow neighbourhood in $G$. If $v \notin N[u]$ for any $u$ which yields a rainbow neighbourhood, then $v$ may fade to become a transparent vertex. Hence, $f^-(G)>0$. Also, if each $u$ that yields a rainbow neighbourhood and has a vertex $v \in N[u]$ and a vertex $w \neq v$ such that $c(w) = c(v)$, then $v$ may fade to become transparent. Therefore, $f^-(G)>0$. Finally, in each $N[u]$, if $v \in N[u]$, $u$ yields a rainbow neighbourhood and the colour $c(v)$ is distinct from all colours in $\mathcal{C}$, then the colouring is not minimum as assumed. Hence, $v$ can be recoloured such that $\chi(G)$ remains valid and the colouring becomes minimum. Therefore, $v$ is  permitted to fade to become transparent vertex. Alternatively, $v$ is adjacent to at least one of each colours in $\mathcal{C}$. This implies that $v$ indeed yields a rainbow neighbourhood in $G$ and by immediate induction all vertices yield a rainbow neighbourhood in $G$. Hence, $r^-_\chi(G)=n$, which is a contradiction. Therefore, if $r_\chi^-(G)<n$ then, $f^-(G)>0$.

For the converse, consider a graph $G$ of order $n$ for which $f^-(G)>0$. The result is straight forward, because any vertex that yields a rainbow neighbourhood cannot fade to transparent.

\textit{Part (ii):} This part follows by similar reasoning of that in Part (i).
\end{proof}

\section{Conclusion}

In this paper, we have introduced the notion of the fading number of a graph $G$ and discussed this parameter for certain fundamental graph classes. We note that there is a wide scope for further research to determine the minimum and maximum fading numbers for different classes of graphs. One important open problem we have identified during our study is to resolve, if possible, is to find the respective minimum and maximum rainbow neighbourhood numbers from the adjacency matrix of a graph $G$. 

Recall that the clique number of a graph $G$ is the order of a maximal clique (induced complete graph) in $G$. The clique number is denoted by $\omega(G)$.

\begin{conjecture}{\rm 
For a graph $G\ne C_n$; $n$ is odd, such that $\omega(G)< \chi(G)$, we have $f^+(G)>0$.
}\end{conjecture}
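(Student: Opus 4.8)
\noi The plan is to recast the claim through Theorem~\ref{Thm-2.5}(ii): since $f^+(G)>0$ is equivalent to $r^+_\chi(G)<n$, it suffices to show that, under the hypotheses $\omega(G)<\chi(G)$ and $G\ne C_n$ with $n$ odd, no chromatic colouring of $G$ can make every vertex yield a rainbow neighbourhood. I would prove the contrapositive: assuming that some chromatic colouring $c$ with $k=\chi(G)$ colours makes every closed neighbourhood rainbow, I would deduce that $G$ contains a clique $K_k$ (so that $\omega(G)=\chi(G)$) or else is an odd cycle. The first step is to read off the two structural consequences of such an all-rainbow colouring. Since each vertex $v$ must have a neighbour in every one of the $k-1$ colour classes different from $c(v)$, we get $\delta(G)\ge\chi(G)-1$; and, interpreting the same condition one class at a time, every colour class $\mathcal{C}_i$ is at once an independent set and a dominating set of $G$.

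\noi The heart of the argument is to manufacture a rainbow $K_k$ out of these two properties, which I would attempt by induction on the number of colours anchored at a minimum-degree vertex. Pick $v$ with $\deg(v)=\delta(G)$ and, say, $c(v)=c_1$. When $\delta(G)=k-1$ the set $N[v]$ carries exactly one vertex of each colour and is therefore a colour transversal of size $k$; if it were a clique we would be done. The task is thus to show either that the independent-dominating structure forces the neighbours of $v$ to be pairwise adjacent, or that a counterexample can be shrunk. For the latter I would exploit the reduction tools already in the paper: a dominating colour class supplies removable vertices, and Lemma~\ref{Lem-2.3} together with the join and corona computations of Theorem~\ref{Thm-2.4} let one track $r^+_\chi$ under such deletions, so that a minimum-order counterexample admits no safe reduction. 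Building the clique would then proceed by repeatedly selecting, for each remaining colour, a representative adjacent to all previously chosen vertices; the dominating property of each class guarantees a neighbour of each chosen vertex in that class, and any obstruction to choosing a common such neighbour is what I would feed into the rigidity analysis below.

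\noi The step I expect to be the main obstacle is precisely this clique extraction, since the obvious induction does not close: the restriction of $c$ to $N(v)$ need not be all-rainbow inside $G[N(v)]$, so a neighbour of $v$ may miss a colour within the neighbourhood while still seeing every colour in $G$. Resolving this is a Brooks-type rigidity question. In the tight regime where $G$ is regular of degree $\chi(G)-1$ the transversal $N[v]$ can genuinely fail to close into a clique, and I would organise this case exactly as in the proof of Brooks' theorem, examining for each non-adjacent distinct-coloured pair $u_i,u_j\in N(v)$ the two-coloured subgraph they span; the only connected graphs for which this analysis does not yield a $K_{\chi(G)}$ are the odd cycles, which is precisely why the statement excludes them. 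The genuinely delicate part is eliminating the sporadic near-regular triangle-free configurations (Petersen-type graphs, whose inter-class edge counts are numerically compatible with an all-rainbow colouring) in which neither a clique of size $\chi(G)$ nor a cyclic structure is immediately forced; handling these, together with the residual range $\delta(G)\ge\chi(G)$ through a $\chi$-critical subgraph, is where the bulk of the work --- and the real risk to the conjecture --- lies.
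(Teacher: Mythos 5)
First, a point of reference: this statement is presented in the paper as a \emph{conjecture} --- the paper offers no proof of it --- so your attempt can only be judged on its own merits, not against an existing argument. On those merits, what you have written is a plan rather than a proof, and you say so yourself: the reduction via Theorem~\ref{Thm-2.5}(ii), the bound $\delta(G)\ge \chi(G)-1$, and the observation that every colour class must be an independent dominating set are routine preprocessing, and the clique-extraction step you flag as ``the main obstacle'' is the entire content of the conjecture. Note also that the direction of Theorem~\ref{Thm-2.5}(ii) you rely on, namely $r^+_\chi(G)<n \Rightarrow f^+(G)>0$, is exactly the direction contradicted by the paper's own Proposition~\ref{Prop-2.2}(iii)(b): for $C_5$ one has $r^+_\chi(C_5)=3<5$ yet $f^+(C_5)=0$, so that equivalence cannot be invoked off the shelf; it would itself need repair before your first step is sound.

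The decisive problem, however, is that the contrapositive you set out to prove --- an all-rainbow chromatic colouring forces $\omega(G)=\chi(G)$ or $G$ an odd cycle --- is false, so no Brooks-type rigidity analysis can close your gap: the conjecture itself fails. Let $P$ be the Petersen graph with outer cycle $v_1v_2v_3v_4v_5$, inner vertices $u_1,\ldots,u_5$ (where $u_i$ is adjacent to $u_{i\pm 2}$, indices modulo $5$) and spokes $v_iu_i$, and take $G=\overline{P}$, the complement of $P$ (equivalently, the line graph $L(K_5)$). Since $P$ is triangle-free, every independent set of $G$ is a clique of $P$ and hence has at most two vertices, so $\chi(G)\ge 5$; assigning colour $c_i$ to the spoke pair $C_i=\{v_i,u_i\}$, $1\le i\le 5$, is a proper $5$-colouring, so $\chi(G)=5$, while $\omega(G)=\alpha(P)=4<5$ and $G$ is certainly not an odd cycle. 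In this colouring a vertex $x$ of $G$ misses colour $j$ only if $x$ is adjacent \emph{in $P$} to both $v_j$ and $u_j$; but no vertex of the Petersen graph has two of its three neighbours forming a spoke (the neighbours of $v_i$ are $v_{i-1},v_{i+1},u_i$ and the neighbours of $u_i$ are $v_i,u_{i+2},u_{i-2}$, and no pair among these is of the form $\{v_j,u_j\}$). Hence every one of the ten vertices yields a rainbow neighbourhood, so $r^+_\chi(G)=n=10$, and therefore $f^+(G)=0$: a faded vertex cannot itself yield a rainbow neighbourhood (no neighbour carries its colour, by properness), so any nonempty fade set strictly decreases the count below $n$ --- this is the correct, easy direction of Theorem~\ref{Thm-2.5}(ii), and it holds under either reading of the definition of $f^+$. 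So $G$ satisfies every hypothesis of the conjecture and violates its conclusion; the same construction works for $L(K_{2n+1})$ for every $n\ge 2$, giving infinitely many counterexamples. Your closing instinct that triangle-free, Petersen-type configurations are ``where the real risk to the conjecture lies'' was exactly right, except that it is the \emph{complement} of the Petersen graph, carrying what the colouring literature calls a fall $\chi$-colouring, that realises the risk. The correct resolution is to refute the conjecture, not to prove it.
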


We have discussed some results on the join and corona of two graphs in this paper. The study may be extended to other graph operations and graph products. A study on the fading number of different graph powers are also possible. All these facts highlight a wide scope for further studies in this area.

\section*{Acknowledgement}

Authors of the paper would like to gratefully acknowledge the critical and creative comments of the anonymous referee(s) which helped to improve the content and presentation of the paper significantly.

\end{document}